\newcommand{\N}{\ensuremath{\mathbb{N}}}
\newcommand{\R}{\ensuremath{\mathbb{R}}}
\newcommand{\Ce}{\ensuremath{\mathscr{C}}}
\theoremstyle{plain}           	
\newtheorem{theorem}{Theorem}
\newtheorem{lemma}[theorem]{Lemma}
\newtheorem{corollary}[theorem]{Corollary}
\theoremstyle{remark}
\newtheorem{remark}[theorem]{Remark}
\DeclareMathOperator*{\tr}{tr}
\DeclareMathOperator*{\rk}{rk}
\newcommand{\bangle}[1]{\left\langle #1 \right\rangle}
\newcommand{\inprod}[2]{\bangle{#1, #2}}
\title{Small codes}
\date{}
\author{Igor Balla}
\begin{document}

\maketitle 

\begin{abstract}
Determining the maximum number of unit vectors in $\R^r$ with no pairwise inner product exceeding $\alpha$ is a fundamental problem in geometry and coding theory. In 1955, Rankin resolved this problem for all $\alpha \leq 0$ and in this paper, we show that the maximum is $(2+o(1))r$ for all $0 \leq \alpha \ll r^{-2/3}$, answering a question of Bukh and Cox. Moreover, the exponent $-2/3$ is best possible. As a consequence, we conclude that when $j \ll r^{1/3}$, a $q$-ary code with block length $r$ and distance $(1-1/q)r - j$ has size at most $(2 + o(1))(q-1)r$, which is tight up to the multiplicative factor $2(1 - 1/q) + o(1)$ for any prime power $q$ and infinitely many $r$. When $q = 2$, this resolves a conjecture of Tiet\"av\"ainen from 1980 in a strong form and the exponent $1/3$ is best possible. Finally, using a recently discovered connection to $q$-ary codes, we obtain analogous results for set-coloring Ramsey numbers.
\end{abstract}

\section{Introduction}

Let $S^{r-1}$ denote the unit sphere in $\R^r$ and define
\[
\rho(r, n) =  \min_{v_1, \ldots, v_{n} \in S^{r-1}}{ \max_{i \neq j}{\inprod{v_i}{v_j}}}.
\]
The question of determining $\rho$ (as well as the extremal configurations) was first posed in 1930 by the botanist Tammes \cite{T30} in the context of studying the arrangements of pores of pollen grains. Since then, it has come to be known as an important problem in geometry and coding theory, see e.g. \cite{CS13, KL78, MT15} for more information. Indeed, this question is equivalent to that of packing spherical caps on a unit sphere (when the angular radius of each cap is $\pi/6$, this is equivalent to the kissing number problem) and it is strongly related to the classical problem of packing spheres in a Euclidean space.

In 1955, Rankin \cite{R55} determined $\rho(r, n)$ for all $n \leq 2r$. It is therefore natural to ask what happens when $n$ is slightly larger than $2r$. This question was considered more recently by Bukh and Cox \cite{BC20}, who showed that if $r$ is sufficiently large relative to $k$, then $\rho(r,2r + k) \leq O\left( \sqrt{k} / r \right)$ and they also noted that the linear programming method of Delsarte, Goethals, and Seidel \cite{DGS77} yields the lower bound $\rho(r, 2r + k) \geq (1-o(1)) k / r^2$. In this paper, we substantially improve this lower bound and as a consequence, we determine that $\rho(r, 2r + k) = \Theta(1/r)$ for any fixed $k \in \N$ as $r \rightarrow \infty$.

\begin{theorem} \label{main}
For all $k, r \in \N$, we have $\rho(r, 2r + k) \geq \frac{ \left( \frac{8}{27}k + 1 \right)^{1/3} - 1}{2r+k}$.
\end{theorem}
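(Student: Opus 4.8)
The plan is to pass to the Gram matrix of a putative configuration and study its spectrum together with the combinatorics of its entries.

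\textbf{Reformulation.} It suffices to prove: if $v_1,\dots,v_n \in S^{r-1}$ with $n = 2r+k$ satisfy $\inprod{v_i}{v_j} \le \alpha$ for all $i \ne j$, then $\alpha \ge \big((\tfrac{8}{27}k+1)^{1/3}-1\big)/n$, equivalently
\[
 n - 2r \ \le\ \tfrac{27}{8}\big((1+\alpha n)^3 - 1\big).
\]
Indeed, applying this to the configuration attaining $\rho(r,2r+k) = \min \max_{i\ne j}\inprod{v_i}{v_j}$ (with $\alpha$ its maximum inner product) yields the theorem. We may assume $k \ge 1$, so $n > 2r$. Let $G = (\inprod{v_i}{v_j})_{i,j}$ be the Gram matrix: it is positive semidefinite, $\rk G \le r$, $G_{ii} = 1$, and $G_{ij} \le \alpha$ for $i \ne j$. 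Let $\lambda_1 \ge \dots \ge \lambda_n \ge 0$ be its eigenvalues; since $\rk G \le r$ we have $\lambda_{r+1} = \dots = \lambda_n = 0$ and $\lambda_1 + \dots + \lambda_r = \tr G = n$, so the nonzero eigenvalues average more than $n/r > 2$.

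\textbf{A spectral inequality.} Set $p(t) = t(t-1)(t-2)$ and $x_0 = n/r > 2$. A short calculus check — using that $p$ is convex on $[1,\infty)$, that $p'$ is increasing there with $p'(2) = 2$, and that $p' \le 2$ on $[0,1]$ — shows the tangent line of $p$ at $x_0$ lies weakly below $p$ on all of $[0,\infty)$: $p(t) \ge p(x_0) + p'(x_0)(t - x_0)$ for every $t \ge 0$. Applying this to $\lambda_1,\dots,\lambda_r$ and summing gives $\sum_{l} p(\lambda_l) = \sum_{l=1}^r p(\lambda_l) \ge r\,p(x_0) + p'(x_0)\big(\sum_{l=1}^r \lambda_l - r x_0\big) = r\,p(n/r)$. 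On the other hand $\sum_l p(\lambda_l) = \tr G^3 - 3\tr G^2 + 2\tr G$, and the closed $3$-walks in $G$ not supported on three distinct indices contribute exactly $3\tr G^2 - 2n$, so
\[
 \sum_{\substack{i,j,\ell\ \text{distinct}}} G_{ij}G_{j\ell}G_{\ell i} \ =\ \sum_l p(\lambda_l)\ \ge\ r\,p(n/r)\ =\ n\Big(\tfrac{n}{r}-1\Big)\Big(\tfrac{n}{r}-2\Big).
\]
(When $\alpha = 0$ every triangle product is $\le 0$, forcing $n \le 2r$; this recovers Rankin's theorem.)

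\textbf{Bounding the triangle sum.} It remains to prove an upper bound of the shape
\[
 \sum_{\substack{i,j,\ell\ \text{distinct}}} G_{ij}G_{j\ell}G_{\ell i}\ \le\ \tfrac{n}{r}\Big(\tfrac{n}{r}-1\Big)\cdot\tfrac{27}{8}\big((1+\alpha n)^3 - 1\big),
\]
since together with the previous display and $n > r$ this gives $\tfrac{n}{r} - 2 \le \tfrac{27}{8r}\big((1+\alpha n)^3 - 1\big)$, i.e.\ the reformulated inequality after multiplying by $r$. To estimate the triangle sum, split each off-diagonal entry of $G$ into its nonnegative part (which is $\le \alpha$) and its negative part and expand the product: the terms containing an odd number of negative factors are $\le 0$; the all-nonnegative term is $\le \alpha^3 n^3$; and the remaining contribution comes from triangles with exactly one positive and two negative edges. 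For the last one uses that the ``negative-correlation graph'' is sparse above any threshold — if $\inprod{v_i}{v_{j_1}},\dots,\inprod{v_i}{v_{j_d}} < -\beta$, then $d^2\beta^2 < \big\|\sum_t v_{j_t}\big\|^2 \le d + d(d-1)\alpha$, so $d(\beta^2 - \alpha) < 1 - \alpha$ — together with the facts that $\|\sum_i v_i\|^2$ and $\rk G \le r$ constrain how negative weight can be distributed. Combining these with the optimal choice of threshold is designed to produce exactly the cubic bound above.

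\textbf{Main obstacle.} The delicate step is this last one. A crude bound on the two-negative-edge triangles, using only $\|\sum_i v_i\| = O(\sqrt n)$, reproduces just the previously known estimate $\rho(r,2r+k) = \Omega(k/r^2)$; extracting the full cubic dependence on $\alpha n$ — and hence the exponent $1/3$ — requires genuinely exploiting $\rk G \le r$, roughly that a configuration cannot simultaneously display a spread-out pattern of near-antipodal pairs and have rank close to $n/2$. This is where the bulk of the work lies. The spectral inequality of the second step is precisely the inversion of the cubic $p$, which is the source of the cube root in the final bound; note also that $p(2) = 0$, matching the extremal configuration $\{\pm e_i\}$ for which all triangle products vanish.
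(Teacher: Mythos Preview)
Your spectral step is correct and attractive: with $p(t)=t(t-1)(t-2)$ one indeed has $p(t)\ge p(x_0)+p'(x_0)(t-x_0)$ for all $t\ge0$ whenever $x_0\ge\tfrac32$ (since $p(t)-p(x_0)-p'(x_0)(t-x_0)=(t-x_0)^2(t-(3-2x_0))$), and summing over the eigenvalues gives the lower bound $T:=\sum_{i,j,\ell\ \text{distinct}}G_{ij}G_{j\ell}G_{\ell i}\ge n(\tfrac{n}{r}-1)(\tfrac{n}{r}-2)$.

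The gap is the upper bound on $T$, which you yourself flag as ``where the bulk of the work lies'' but do not carry out. A few concrete issues: (i) The target inequality you write, $T\le\tfrac{n}{r}(\tfrac{n}{r}-1)\cdot\tfrac{27}{8}\big((1+\alpha n)^3-1\big)$, involves $r$ on the right-hand side; but any honest upper bound on $T$ should come from $G_{ii}=1$, $G_{ij}\le\alpha$, and positive semidefiniteness alone, not from the rank. The $r$-dependence signals that the inequality has been reverse-engineered from the conclusion rather than derived. (ii) Your remark that extracting the cubic requires ``genuinely exploiting $\rk G\le r$'' is therefore pointing in the wrong direction: rank is fully spent in the lower bound on $T$; what the upper bound needs is a careful use of positive semidefiniteness to control the contribution of very negative inner products. (iii) The ingredients you list (the degree bound $d(\beta^2-\alpha)<1-\alpha$ for $\beta$-negative neighbors, and $\|\sum_i v_i\|^2\ge0$) are true but, as you note, only recover $\rho=\Omega(k/r^2)$; you give no mechanism for upgrading them to the needed cubic.

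For comparison, the paper does \emph{not} pass through $\tr G^3$ at all. It uses the second moment: Cauchy--Schwarz gives $\tr(G)^2\le r\,\tr(G^2)$, so it suffices to show $\tr(G^2)\le 2n+\tfrac{27}{4}\big((1+\alpha n)^3-1\big)$. The whole difficulty is then to bound $\sum_{v\in N^-(u)}\langle u,v\rangle^2$; this is done by two short positive-semidefiniteness arguments with explicit test vectors, first showing $\sum_{v\in N^-(u)}\langle u,v\rangle^2\le 1+\alpha\gamma(u)^2$ with $\gamma(u)=\sum_{v\in N^-(u)}\langle u,v\rangle$, and then $\sum_u\gamma(u)^2\le\tfrac{27}{4}(1+\alpha n)^2 n$. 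These PSD test-vector estimates are precisely the missing idea in your sketch, and they make the route via $\tr G^2$ both shorter and complete.
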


In general, a collection of unit vectors in a Euclidean space is called a \emph{spherical $L$-code} if all pairwise inner products lie in the set $L$. If we define $M(r, \alpha)$ to be the maximum size of a spherical $[-1,\alpha]$-code in $\R^r$, then it is not hard to see that determining $M$ is equivalent to determining $\rho$. Since it is well known that $M(r, 0) = 2r$, we obtain the following immediate corollary of \Cref{main}.

\begin{corollary} \label{cor_spherical}
For all $r \in \N$ and $\alpha \geq 0$ such that $\alpha = o\left(r^{-2/3}\right)$ as $r \rightarrow \infty$, we have
\[
M(r, \alpha) = (2 + o(1))r.
\]
\end{corollary}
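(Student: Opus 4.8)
The plan is to obtain Corollary~\ref{cor_spherical} from Theorem~\ref{main} together with the equality $M(r,0)=2r$; no new geometric idea is needed, only routine asymptotic bookkeeping. I would first record the (one-sided) dictionary between the two extremal quantities: since $\rho(r,n)$ is a minimum over all $n$-point configurations, the existence of a spherical $[-1,\alpha]$-code of size $n$ in $\R^r$ witnesses $\rho(r,n)\le\alpha$, so contrapositively $\rho(r,n)>\alpha$ forces $M(r,\alpha)<n$. (It is also convenient, though not strictly necessary, to observe that $\rho(r,\cdot)$ is non-decreasing, as deleting a vector from a configuration cannot increase the largest pairwise inner product.) The lower bound in the corollary is then immediate: for $\alpha\ge 0$ a spherical $[-1,0]$-code is in particular a spherical $[-1,\alpha]$-code, so $M(r,\alpha)\ge M(r,0)=2r$.

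For the matching upper bound, fix an arbitrary $\epsilon>0$ and put $n=n(r):=\lceil(2+\epsilon)r\rceil$, so that $n=2r+k$ with $k=k(r)\ge\epsilon r$. Applying Theorem~\ref{main} with this $k$, and using $2r+k\le(2+\epsilon)r+1$ together with monotonicity of $t\mapsto(\tfrac{8}{27}t+1)^{1/3}$, one gets
\[
\rho(r,n)\ \ge\ \frac{\bigl(\tfrac{8}{27}k+1\bigr)^{1/3}-1}{2r+k}\ \ge\ \frac{\bigl(\tfrac{8}{27}\epsilon r+1\bigr)^{1/3}-1}{(2+\epsilon)r+1}\ =\ \bigl(1-o(1)\bigr)\,\frac{(\tfrac{8}{27}\epsilon)^{1/3}}{2+\epsilon}\,r^{-2/3}.
\]
Hence $\rho(r,n)\ge c_\epsilon\,r^{-2/3}$ for some constant $c_\epsilon>0$ and all sufficiently large $r$. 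Since $\alpha=o(r^{-2/3})$, eventually $\alpha<c_\epsilon r^{-2/3}\le\rho(r,n)$, so by the dictionary $M(r,\alpha)<n\le(2+\epsilon)r+1$. As $\epsilon>0$ was arbitrary, $M(r,\alpha)\le(2+o(1))r$, and together with the lower bound this is exactly the claimed $M(r,\alpha)=(2+o(1))r$.

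The one subtlety to be careful with — the ``main obstacle,'' such as it is — is the logical direction: Theorem~\ref{main} should be invoked at the threshold $n\approx(2+\epsilon)r$, not at $n=M(r,\alpha)$ itself. Indeed, the lower bound of Theorem~\ref{main} only exceeds a constant multiple of $r^{-2/3}$ in the regime $k=\Theta(r)$ and becomes weaker for much larger $k$, so plugging in $n=M(r,\alpha)$ directly would not be enough to conclude; instead one feeds in the threshold value, obtains $\rho(r,n)>\alpha$, and reads off the contrapositive. Everything else is elementary estimation of the cube root.
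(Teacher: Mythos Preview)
Your proposal is correct and is precisely the argument the paper has in mind: it declares the corollary ``immediate'' from Theorem~\ref{main} together with $M(r,0)=2r$, and you have simply written out the routine asymptotic bookkeeping (lower bound from $M(r,0)=2r$, upper bound by applying Theorem~\ref{main} at the threshold $n=\lceil(2+\epsilon)r\rceil$ and reading off the contrapositive). Your remark about why one should plug in the threshold rather than $n=M(r,\alpha)$ is a nice clarification of a point the paper leaves implicit.
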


Since $q$-ary codes give rise to spherical codes, we will show that \Cref{cor_spherical} implies a bound on $q$-ary codes. More precisely, a subset $C \subseteq [q]^r$ is called a \emph{$q$-ary code with block length $r$ and distance $s$} if any distinct $x, y \in C$ differ in at least $s$ coordinates and we let $A_q(r,s)$ denote the maximum size of such a code. Similarly to the spherical case, $A_q(r, s)$ is fairly well understood when $s \geq (1 - 1/q)r$ due to the bounds of Plotkin \cite{P60} for binary codes ($q = 2$) and of Blake and Mullin \cite{BM76}, Mackenzie and Seberry \cite{MS88} for $q \geq 3$. In particular, Plotkin showed that 
\begin{equation} \label{eq_plotkin}
A_2(r, r/2) \leq 2r \text{ with equality if there exists a Hadamard matrix of order } r
\end{equation}
and more generally, Mackenzie and Seberry showed that for $r \geq q \geq 3$,
\begin{equation} \label{eq_ms}
A_q(r,(1-1/q)r) \leq q r \text{ with equality if } r, q \text{ are powers of the same prime.}
\end{equation}
Moreover, for binary codes and when $s$ is slightly smaller than $r/2$, i.e.\ when $s = r/2 - j$ for $j = o\left(r^{1/3}\right)$, Tiet\"av\"ainen \cite{T80} conjectured that $A_2(r, r/2 - j) = O(r)$. We will show that any $q$-ary code with block length $r$ and distance $(1- 1/q)r - j$ can be transformed into a spherical $\left[-1, \frac{q j}{(q-1) r} \right]$-code in $\R^{(q-1)r}$, so that applying \Cref{cor_spherical} will yield the following result, which verifies Teit\"av\"ainen's conjecture in a strong form.

\begin{theorem} \label{q-ary}
Let $q \geq 2$ be a fixed integer and let $r \in \N$ and  $j \geq 0$ be such that $(1-1/q)r - j \in \N$. If $j = o\left(r^{1/3}\right)$ as $r \rightarrow \infty$, then
\[
A_q\left( r, \left( 1 - 1/q \right) r - j \right) \leq \left(2 + o(1)\right) \left(q - 1 \right) r.
\]
\end{theorem}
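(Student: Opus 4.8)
The plan is to make precise the reduction announced just before the statement: embed a $q$-ary codeword as a point on the unit sphere in $\R^{(q-1)r}$ and then invoke \Cref{cor_spherical}. First I would fix $q$ equiangular unit vectors $u_1, \ldots, u_q$ spanning $\R^{q-1}$ that form the vertices of a regular simplex, so that $\inprod{u_a}{u_b} = -\frac{1}{q-1}$ whenever $a \neq b$; concretely one takes the images of the standard basis of $\R^q$ under orthogonal projection onto the hyperplane $\mathbf 1^{\perp}$, rescaled to unit length (for $q = 2$ these are just $\pm 1 \in \R$, recovering the classical sign encoding). Given a code $C \subseteq [q]^r$ with block length $r$ and distance $s = (1 - 1/q)r - j$, define $\Phi \colon [q]^r \to \R^{(q-1)r}$ by $\Phi(x) = \frac{1}{\sqrt r}\bigl(u_{x_1}, \ldots, u_{x_r}\bigr)$, the normalized concatenation of the simplex vertices indexed by the coordinates of $x$; each $\Phi(x)$ is then a unit vector.

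Next I would compute the pairwise inner products. If $x, y \in [q]^r$ agree in exactly $a$ coordinates (hence differ in $r - a$), then
\[
\inprod{\Phi(x)}{\Phi(y)} = \frac{1}{r}\left(a - \frac{r - a}{q - 1}\right) = \frac{aq - r}{(q-1)r}.
\]
This is at least $-\frac{1}{q-1} \geq -1$, with equality when $a = 0$. On the other hand, if $x$ and $y$ are distinct codewords of $C$, they differ in at least $s$ coordinates, so $a \leq r - s = r/q + j$ and therefore $\inprod{\Phi(x)}{\Phi(y)} \leq \frac{q(r/q + j) - r}{(q-1)r} = \frac{qj}{(q-1)r}$. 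In particular (for $s \geq 1$) distinct codewords differ somewhere and, since distinct simplex vertices are distinct, $\Phi$ is injective on $C$; thus $\Phi(C)$ is a spherical $\left[-1, \frac{qj}{(q-1)r}\right]$-code in $\R^{(q-1)r}$ of size exactly $|C|$.

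Finally I would apply \Cref{cor_spherical} with ambient dimension $r' := (q-1)r$ and $\alpha := \frac{qj}{(q-1)r}$. Since $q$ is fixed, $\alpha = \Theta(j/r)$ and $(r')^{-2/3} = \Theta(r^{-2/3})$, so the hypothesis $j = o(r^{1/3})$ is precisely what guarantees $\alpha = o\bigl((r')^{-2/3}\bigr)$ as $r \to \infty$. Consequently $|C| = |\Phi(C)| \leq M(r', \alpha) = (2 + o(1))r' = (2 + o(1))(q-1)r$, as claimed; when $j = 0$ one even gets the sharper $|C| \leq 2(q-1)r$ from $M(r', 0) = 2r'$.

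There is no genuine obstacle in this argument — all the difficulty resides in \Cref{main} and \Cref{cor_spherical}, and what remains is essentially bookkeeping. The only points deserving a line of care are: verifying that the regular simplex of $q$ unit vectors in $\R^{q-1}$ with mutual inner product $-1/(q-1)$ exists and has the stated dimension; handling the degenerate range of $s$ (so that $\Phi$ is injective on $C$); and confirming that treating $q$ as a constant lets the $o(1)$ term from \Cref{cor_spherical} pass through without change.
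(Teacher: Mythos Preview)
Your proposal is correct and follows essentially the same argument as the paper: both encode each symbol by a vertex of the regular simplex in $\R^{q-1}$, concatenate and normalize to obtain a spherical $\left[-1,\frac{qj}{(q-1)r}\right]$-code in $\R^{(q-1)r}$, and then apply \Cref{cor_spherical}. Your write-up is in fact slightly more careful than the paper's (you explicitly check injectivity of the encoding and verify that $j = o(r^{1/3})$ translates to $\alpha = o((r')^{-2/3})$), but the route is identical.
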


\begin{remark} \label{sidelnikov}
Note that Sidel'nikov \cite{S71} constructed a binary code with block length $r$ and distance $r/2 - \Theta\left(r^{1/3}\right)$ having size $\Theta\left(r^{4/3}\right)$ for infinitely many $r$, so that the exponent $1/3$ appearing in \Cref{q-ary} is best possible for $q = 2$. Moreover, by changing each $0$ to a $1$, each $1$ to a $-1$, and normalizing the resulting vectors, his construction yields a spherical $\left[ -1, \Theta\left(r^{-2/3}\right) \right]$-code of size $\Theta\left(r^{4/3} \right)$ in $\R^r$, so that the exponent $-2/3$ appearing in \Cref{cor_spherical} is also best possible. Furthermore, if $k = \Theta(r)$ then by considering any subset of $2r + k$ vectors from this spherical code, one can see that \Cref{main} is tight up to a multiplicative constant, i.e.\ $\rho(r , 2r + k) = \Theta\left( \frac{k^{1/3}}{2r+k}\right)$.
\end{remark}

It follows directly from the lower bounds of \eqref{eq_plotkin} and \eqref{eq_ms} that for any prime power $q$, the upper bound given in \Cref{q-ary} is tight up to the multiplicative factor $2(1 - 1/q) + o(1)$ for infinitely many $r$. In particular, if there exists a Hadamard matrix of order $r$ and $j = o(r^{1/3})$, then we conclude that $A_2\left( r, r/2 - j \right) = \left(2 + o(1) \right) r$ as $r \rightarrow \infty$.

Very recently, Conlon, Fox, He, Mubayi, Suk, and Verstra\"ete \cite{CFHMSV22} established a connection between error-correcting codes and a variant of the classical Ramsey numbers. For all $q, r, s \in \N$ with $r > s$, the \emph{set-coloring Ramsey number} $R(q; r, s)$ is defined to be the minimum $n$ such that if each edge of the complete graph $K_n$ receives $s$ colors out of a universe of $r$ colors, then there exist a set of $q$ vertices all of whose edges received the same color. In particular, $R(q; r, 1)$ is just the usual $r$-color Ramsey number of a $q$-clique. For all $q, r, s \in \N$ with $r > s$, it was shown in \cite{CFHMSV22} that 
\begin{equation} \label{eq_lower}
R(q+1; r, s) \geq A_q(r,s) + 1
\end{equation}
and on the other hand, Conlon, Fox, Pham, and Zhao \cite{CFPZ23} showed that this bound is approximately tight when $s$ is near $(1-1/q) r$ by proving that for any $\varepsilon > 0$, there exists a $c > 0$ such that if $r, s \in \N$ with $s \leq (1-1/q)r$ and $j = (1-1/q)r - s + 1$, then
\begin{equation} \label{eq_upper}
R(q+1; r, s) \leq \max((1+\varepsilon)A(r, s - c j), \varepsilon s).
\end{equation}

As a consequence of \eqref{eq_upper} and \Cref{q-ary}, we immediately obtain the following upper bound on set-coloring Ramsey numbers when $s$ is slightly below $(1-1/q)r$. 
\begin{corollary} \label{cor_ramsey}
Let $q \geq 2$ be a fixed integer and let $r \in \N$ and  $j \geq 0$ be such that $(1-1/q)r - j \in \N$. If $j = o\left(r^{1/3}\right)$ as $r \rightarrow \infty$, then
\[ 
R(q+1; r, (1-1/q)r - j) \leq (2 + o(1))(q - 1)r.
\]
\end{corollary}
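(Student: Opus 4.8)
The plan is simply to feed \Cref{q-ary} into the upper bound \eqref{eq_upper} of Conlon, Fox, Pham, and Zhao; the only point needing a little care is the order in which the limits in $\varepsilon$ and $r$ are taken. Throughout, $q$ is the fixed integer, and we abbreviate $s = (1-1/q)r - j$, which is a nonnegative integer by hypothesis and satisfies $s \le (1-1/q)r$. With this notation the quantity playing the role of ``$j$'' in \eqref{eq_upper} equals $(1-1/q)r - s + 1 = j+1$, and the ``$A$'' there is of course $A_q$. So, fixing an arbitrary $\varepsilon > 0$ and letting $c = c(\varepsilon) > 0$ be the constant furnished by \eqref{eq_upper} (it may depend on $q$ as well, which is harmless), we get
\begin{equation*}
R(q+1;\, r,\, s) \;\le\; \max\!\bigl(\,(1+\varepsilon)\,A_q\bigl(r,\, s - c(j+1)\bigr),\ \varepsilon s\,\bigr).
\end{equation*}

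First I would put the first term into a form to which \Cref{q-ary} applies. Since a $q$-ary code attains distance at least a real number $d$ precisely when it attains distance at least $\lceil d\rceil$, and since $s$ is an integer, we have $A_q\bigl(r, s - c(j+1)\bigr) = A_q\bigl(r, s - \lfloor c(j+1)\rfloor\bigr) = A_q\bigl(r, (1-1/q)r - \tilde\jmath\,\bigr)$, where $\tilde\jmath := j + \lfloor c(j+1)\rfloor$ is a nonnegative integer and $(1-1/q)r - \tilde\jmath \in \N$ once $r$ is large. Because $c$ is a constant and $j = o(r^{1/3})$, we also have $\tilde\jmath \le (1+c)j + c = o(r^{1/3})$, so \Cref{q-ary} yields $A_q\bigl(r, s - c(j+1)\bigr) \le (2 + o(1))(q-1)r$ as $r \to \infty$ (the rate of this $o(1)$ may depend on $\varepsilon$ through $c$, which causes no trouble in what follows).

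It then remains to see that the second term is negligible: $\varepsilon s \le \varepsilon(1-1/q)r < \varepsilon r \le (1+\varepsilon)(q-1)r$ since $q \ge 2$, and for large $r$ the right-hand side is at most $(1+\varepsilon)(2+o(1))(q-1)r$, which also bounds the first term. Hence $R(q+1;\, r,\, s) \le (1+\varepsilon)(2+o(1))(q-1)r$, and therefore $\limsup_{r\to\infty} R(q+1;\, r,\, s)/\bigl((q-1)r\bigr) \le 2(1+\varepsilon)$. Since $\varepsilon > 0$ was arbitrary, the $\limsup$ is at most $2$, i.e.\ $R(q+1;\, r,\, (1-1/q)r - j) \le (2+o(1))(q-1)r$, as claimed. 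I do not expect any genuine obstacle here: all of the content is already packaged in \eqref{eq_upper} and \Cref{q-ary}, and the proof is just the bookkeeping above, with the one mildly delicate point being the ``$\varepsilon$ first, then $r \to \infty$'' diagonalization (together with the rounding that makes \Cref{q-ary} literally applicable after the perturbation $s \mapsto s - c(j+1)$).
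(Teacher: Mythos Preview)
Your proof is correct and is exactly the approach the paper indicates: combine \eqref{eq_upper} with \Cref{q-ary}, with the only work being the bookkeeping you carry out (rounding the perturbed distance, checking $\tilde\jmath = o(r^{1/3})$, and letting $\varepsilon \to 0$ after $r \to \infty$). The paper itself does not spell out these details, so your write-up is a faithful elaboration of the one-line justification given there.
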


Using \eqref{eq_lower}, together with the lower bound constructions mentioned in \eqref{eq_plotkin}, \eqref{eq_ms}, and \Cref{sidelnikov}, we analogously have that for any prime power $q$, \Cref{cor_ramsey} is tight up to the multiplicative factor $2(1 - 1/q) + o(1)$ for infinitely many $r$ and when $q = 2$, the exponent $1/3$ is best possible. In particular, if there exists a Hadamard matrix of order $r$ and $j = o(r^{1/3})$, then $R(3; r, r/2 - j) = (2 + o(1))r$  as $r \rightarrow \infty$.

\section{Proofs}

In order to establish \Cref{main}, we will need the following simple but powerful lemma, which provides a quantitative version of the fact that a matrix with large trace but whose square has small trace, must have large rank (see e.g.\ \cite{A09} for other applications).

\begin{lemma} \label{Schnirelman}
Let $M$ be a symmetric real matrix with rank $r$. Then 
\[
\tr(M)^2 \leq r \tr(M^2).
\]
\end{lemma}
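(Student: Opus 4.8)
The plan is to diagonalize and apply Cauchy–Schwarz. Since $M$ is a symmetric real matrix of rank $r$, the spectral theorem gives an orthonormal basis of eigenvectors, so $M$ has real eigenvalues $\lambda_1, \ldots, \lambda_n$ (with multiplicity), of which exactly $r$ are nonzero — say $\lambda_1, \ldots, \lambda_r$ — and the rest are zero. Then $\tr(M) = \sum_{i=1}^r \lambda_i$ and $\tr(M^2) = \sum_{i=1}^r \lambda_i^2$, since the zero eigenvalues contribute nothing to either sum.

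Next I would apply the Cauchy–Schwarz inequality to the vectors $(\lambda_1, \ldots, \lambda_r)$ and $(1, 1, \ldots, 1) \in \R^r$:
\[
\tr(M)^2 = \left( \sum_{i=1}^r \lambda_i \cdot 1 \right)^2 \leq \left( \sum_{i=1}^r \lambda_i^2 \right) \left( \sum_{i=1}^r 1^2 \right) = r \tr(M^2),
\]
which is exactly the claimed bound. The only mild subtlety — hardly an obstacle — is justifying that one may restrict the two trace sums to the $r$ nonzero eigenvalues so that the all-ones vector has the matching length $r$; this is immediate because $M$ symmetric forces it to be diagonalizable with all algebraic multiplicities equal to geometric multiplicities, so the number of nonzero eigenvalues counted with multiplicity equals $\rk(M) = r$. (If $M$ were merely assumed diagonalizable this step would still go through; symmetry is used only to guarantee diagonalizability and real spectrum.)

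I expect no real difficulty here: the statement is essentially the equality case of Cauchy–Schwarz dressed up in spectral language, and the entire proof is three lines once the spectral decomposition is invoked. The one thing to be careful about in the write-up is the edge case $r = 0$ (the zero matrix), where both sides are $0$ and the inequality holds trivially.
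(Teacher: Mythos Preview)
Your proof is correct and follows exactly the same approach as the paper: diagonalize via the spectral theorem to reduce to the $r$ nonzero eigenvalues, then apply Cauchy--Schwarz to $(\lambda_1,\ldots,\lambda_r)$ against the all-ones vector. The paper's write-up is terser but the content is identical.
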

\begin{proof}
Since $M$ is a symmetric real matrix, $M$ has precisely $r$ non-zero real eigenvalues $\lambda_1,\dots,\lambda_{r}$. Applying Cauchy--Schwarz then yields the desired bound
\[ 
\tr(M)^2 = \left(\sum_{i=1}^r \lambda_i \right)^2 \leq r \sum_{i=1}^r \lambda_i^2 = r \tr(M^2). \qedhere
\]
\end{proof}

Now let $k, r \in \N, \alpha \in [0,1)$, and  let $\Ce$ denote a set of $n = 2r + k$ unit vectors in $\R^d$. We define $M = M_{\Ce}$ to be the corresponding \emph{Gram matrix}, i.e.\ the $\Ce \times \Ce$ matrix satisfying $M(u,v) = \inprod{u}{v}$ for $u, v \in \Ce$. We would like to apply \Cref{Schnirelman} to $M$ and so we will need to bound $\tr(M^2)$, i.e.\ the sum of the squares of the entries of $M$. If $\Ce$ were a spherical $[-\alpha, \alpha]$-code, then we would have $\tr(M^2) \leq n + \alpha^2 n(n-1)$ and so \Cref{Schnirelman} would yield the lower bound $\alpha \geq \Omega\left( k^{1/2} / r \right)$. However, the same bound doesn't follow if all we know is that $\Ce$ is a spherical $[-1,\alpha]$-code, since inner products can be very negative. Nonetheless, the following lemmas show that we can effectively bound the sum of the squares of the negative inner products by using the fact that $M$ is positive semidefinite.

To this end, we will need the following definitions. We think of $M$ as a signed weighted complete graph with vertex set $\Ce$ and so we say that the edge $uv$ is \emph{negative} if $\inprod{u}{v} < 0$. Moreover, for any $u \in \Ce$, we define $N^{+}(u) = \{ v \in \Ce: \inprod{u}{v} \geq 0 \}$ and $N^{-}(u) = \{ v \in \Ce : \inprod{u}{v} < 0\}$. We also define $\gamma(u) = \sum_{v \in N^{-}(u)}{\inprod{u}{v}}$, i.e.\ the sum of the negative edges incident to $u$.

\begin{lemma} \label{beta_edges}
Let $r \in \N, \alpha \in [0,1)$ and let $\Ce$ be a spherical $[-1,\alpha]$-code. For all $u \in \Ce$, we have
\[ \sum_{v \in N^{-}(u)}{\inprod{u}{v}^2} \leq 1 + \alpha \gamma(u)^2 . \]
\end{lemma}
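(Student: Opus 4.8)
The plan is to fix $u \in \Ce$ and pass to the auxiliary vector
\[
w = \sum_{v \in N^{-}(u)} \inprod{u}{v}\, v,
\]
which is chosen precisely so that $\inprod{u}{w} = \sum_{v \in N^{-}(u)} \inprod{u}{v}^2$, i.e.\ the left-hand side of the claimed inequality. Write $S = \sum_{v \in N^{-}(u)} \inprod{u}{v}^2$ and $\gamma = \gamma(u)$. Since $u$ and $w$ are both vectors in the ambient Euclidean space, the $2\times 2$ Gram matrix of $\{u, w\}$ is positive semidefinite (this is the local manifestation of the positive semidefiniteness of $M$ hinted at in the discussion above), which is just Cauchy--Schwarz: $S^2 = \inprod{u}{w}^2 \leq \|u\|^2\|w\|^2 = \|w\|^2$.

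The next step is to expand
\[
\|w\|^2 = \sum_{v \in N^{-}(u)} \inprod{u}{v}^2\|v\|^2 + \sum_{\substack{v, v' \in N^{-}(u) \\ v \neq v'}} \inprod{u}{v}\inprod{u}{v'}\inprod{v}{v'} = S + \sum_{\substack{v, v' \in N^{-}(u) \\ v \neq v'}} \inprod{u}{v}\inprod{u}{v'}\inprod{v}{v'}.
\]
The crucial observation is that for distinct $v, v' \in N^{-}(u)$ the coefficient $\inprod{u}{v}\inprod{u}{v'}$ is positive (both factors are negative), while $\inprod{v}{v'} \leq \alpha$ because $\Ce$ is a spherical $[-1,\alpha]$-code; hence the cross-term sum is at most $\alpha \sum_{v \neq v'} \inprod{u}{v}\inprod{u}{v'} = \alpha(\gamma^2 - S)$. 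Using $\alpha \geq 0$ and $S \geq 0$, this gives $\|w\|^2 \leq S + \alpha(\gamma^2 - S) \leq S + \alpha\gamma^2$, and combining with the previous paragraph yields the quadratic inequality $S^2 \leq S + \alpha\gamma^2$.

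Finally I would extract the bound from $S^2 - S \leq \alpha\gamma^2$. If $S \leq 1$ then trivially $S \leq 1 + \alpha\gamma^2$; and if $S \geq 1$ then $S - 1 \leq S(S-1) = S^2 - S \leq \alpha\gamma^2$, so again $S \leq 1 + \alpha\gamma^2$, as desired. I expect the only genuine obstacle to be guessing the right auxiliary vector: weighting each $v \in N^{-}(u)$ by its own inner product $\inprod{u}{v}$ is exactly what makes $\inprod{u}{w}$ equal the target quantity and makes the cross terms collapse to $\gamma^2 - S$ once the bound $\inprod{v}{v'} \leq \alpha$ is applied; the rest is elementary manipulation.
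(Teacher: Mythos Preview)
Your proof is correct and uses essentially the same idea as the paper: both pass to the weighted combination $w = \sum_{v \in N^{-}(u)} \inprod{u}{v}\, v$ and bound the cross terms via $\inprod{v}{v'} \leq \alpha$ after observing that the weights $\inprod{u}{v}\inprod{u}{v'}$ are nonnegative. The only difference is that the paper, instead of Cauchy--Schwarz, plugs the test vector $x$ with $x(u)=1$, $x(v)=-\inprod{u}{v}$ into $\inprod{Mx}{x}\geq 0$ (equivalently, expands $\|u-w\|^2 \geq 0$), which gives the linear inequality $S \leq 1 + \alpha\gamma(u)^2$ immediately and so avoids your quadratic inequality and case split at the end.
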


\begin{proof}
Let $M$ be the Gram matrix for $\{u\} \cup N^{-}(u)$ and let $x : \{u\} \cup N^{-}(u) \rightarrow \R$ be the vector with $x(u) = 1$ and $x(v) = - \inprod{u}{v}$ for all $v \in N^{-}(u)$. Since $M$ is positive semidefinite, we obtain
\begin{align*}
0 \leq \inprod{Mx}{x} 
&= 1 + \sum_{v \in N^{-}(u)}{x(v)^2} - 2 \sum_{v \in N^{-}(u)}{x(v)^2} + \sum_{\substack{v, w \in N^{-}(u) \\ v \neq w}}{x(v) x(w) \inprod{v}{w}}\\
&\leq 1 - \sum_{v \in N^{-}(u)}{x(v)^2} + \alpha \sum_{\substack{v, w \in N^{-}(u) \\ v \neq w}}{x(v) x(w)}\\
&\leq 1 - \sum_{v \in N^{-}(u)}{x(v)^2} + \alpha \left( \sum_{v \in N^{-}(u)}{x(v)} \right)^2,
\end{align*}
which is equivalent to the desired result.
\end{proof}

\begin{lemma} \label{gamma_bound}
Let $r \in \N, \alpha \in [0,1)$ and let $\Ce$ be a spherical $[-1,\alpha]$-code with $n = |\Ce|$. Then we have
\[ \sum_{u \in \Ce}{\gamma(u)^2} \leq \frac{27}{4} (1+\alpha n)^2 n .\]
\end{lemma}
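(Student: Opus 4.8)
The plan is to deduce the bound from positive semidefiniteness of the Gram matrix $M$ alone, via a one‑parameter family of test vectors (so that, in fact, Lemma~\ref{beta_edges} is not needed). Let $E^- := \{\{u,v\} : \inprod{u}{v} < 0\}$ denote the set of negative edges, write $g_u := -\gamma(u) = \sum_{v \in N^-(u)} (-\inprod{u}{v}) \ge 0$, and for $\{u,v\} \in E^-$ put $t_{uv} := -\inprod{u}{v} \in (0,1]$, so that $g_u = \sum_{v \in N^-(u)} t_{uv}$ and $\gamma(u)^2 = g_u^2$. The elementary identity driving the proof is
\[
\sum_{u \in \Ce} \gamma(u)^2 \;=\; \sum_{u \in \Ce} g_u^2 \;=\; \sum_{u} g_u \sum_{v \in N^-(u)} t_{uv} \;=\; \sum_{\{u,v\} \in E^-} (g_u + g_v)\, t_{uv},
\]
and similarly $\sum_{\{u,v\} \in E^-} t_{uv} = \tfrac12 \sum_u g_u$.

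First I would record one consequence of $M \succeq 0$: for any $\lambda \colon \Ce \to [0,\infty)$, expanding $0 \le \big\| \sum_u \lambda_u u \big\|^2$, splitting the off‑diagonal contribution into positive and negative edges, and bounding $\inprod{u}{v} \le \alpha$ on the positive edges, gives
\[
2 \sum_{\{u,v\} \in E^-} \lambda_u \lambda_v\, t_{uv} \;\le\; \sum_u \lambda_u^2 \;+\; \alpha \Big( \sum_u \lambda_u \Big)^{2} .
\]
Taking $\lambda \equiv 1$ already yields the crude bound $\Gamma := \sum_u g_u \le n(1+\alpha n) =: nB$.

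The main step applies this inequality with the affine test vector $\lambda_u := a + g_u$, where $a > \tfrac12$ is a parameter to be chosen. Since $\lambda_u \lambda_v \ge a^2 + a(g_u+g_v)$ (because $g_u g_v \ge 0$), the left‑hand side is at least $a^2\Gamma + 2a\sum_u g_u^2$ by the identity above, while the right‑hand side equals $na^2 + 2a\Gamma + \sum_u g_u^2 + \alpha(na+\Gamma)^2$. Writing $Y := \sum_u \gamma(u)^2$ and rearranging,
\[
(2a-1)\,Y \;\le\; na^2 + a(2-a)\,\Gamma + \alpha(na+\Gamma)^2 .
\]
Now I would substitute $\Gamma \le nB$ (legitimate in the term $a(2-a)\Gamma$ once $a \le 2$; when $a > 2$ that term is $\le 0$ and is discarded) and use $\alpha n^2 = (B-1)n$. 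A short computation in which several terms cancel reduces the right‑hand side to $n\big[4 + (B-1)(B+2)^2\big] = nB^2(B+3)$ when $a=2$, giving $Y \le \tfrac{B+3}{3}\,nB^2 \le \tfrac{27}{4}\,nB^2$ provided $B \le \tfrac{69}{4}$; and for $B > \tfrac{69}{4}$ the choice $a = B$ instead collapses the bound to $(2B-1)Y \le nB^2(4B-3)$, i.e.\ $Y < 2nB^2$. In every case $Y \le \tfrac{27}{4}(1+\alpha n)^2 n$.

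The only point requiring foresight is the choice of test vector: the affine shape $\lambda_u = a + g_u$ is precisely what converts the multiplicative edge weights $\lambda_u\lambda_v t_{uv}$ produced by positive semidefiniteness into the additive quantity $\sum_{\{u,v\}\in E^-}(g_u+g_v)t_{uv} = \sum_u \gamma(u)^2$ that we want to bound, and the free parameter $a$ is exactly what balances the small‑ and large‑$\alpha n$ regimes to produce the constant $\tfrac{27}{4} = 1/\max_x x^2(1-x)$ (the reciprocal of the $\tfrac{8}{27}$ of Theorem~\ref{main}). Everything else is routine expansion.
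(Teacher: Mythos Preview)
Your argument is correct. Both your proof and the paper's use positive semidefiniteness of the Gram matrix against a test vector built from the negative row sums $g_u = -\gamma(u)$, but the choice of test vector is different. The paper takes a \emph{thresholded} vector: set $L = \tfrac{3}{2}(1+\alpha n)$, put $x(u) = g_u$ on the ``big'' set $\{u : g_u \ge L\}$ and $x(u) = L$ elsewhere, and expand $\langle Mx, x\rangle \ge 0$; the threshold is what lets them bound the negative-edge contribution uniformly by $-L\sum_{u \text{ big}} x(u)^2$, and the constant $\tfrac{27}{4}$ drops out in a single computation with no case split. You instead take the \emph{affine} vector $\lambda_u = a + g_u$, drop the cross term $g_u g_v$, and optimise over $a$; this is arguably more transparent (the identity $\sum_{\{u,v\}\in E^-}(g_u+g_v)t_{uv} = \sum_u g_u^2$ makes it clear why an affine shift is the right shape), but it costs you a small case analysis on the size of $B = 1 + \alpha n$. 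Either approach yields the same constant, and neither uses Lemma~\ref{beta_edges}.
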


\begin{proof}
Let $L = \frac{3}{2}(1 + \alpha n)$ and let $M = M_{\Ce}$ be the Gram matrix of $\Ce$. Also define $B = \{ u \in \Ce : -\gamma(u) \geq L \}$ and let $x: \Ce \rightarrow \R$ be the vector defined by
\[
x(u) = 
\begin{cases} 
      -\gamma(u) & \text{if } u \in B\\
      L & \text{if } u \in \Ce \backslash B. 
\end{cases}
\]
Since $M$ is positive semidefinite, we obtain
\begin{align*}
0 \leq \inprod{Mx}{x}
&= \sum_{u \in \Ce}{x(u)^2} + \sum_{u \in \Ce}{ x(u) \sum_{v \in N^{+}(u) }{x(v) \inprod{u}{v}} } + \sum_{u \in \Ce}{ x(u) \sum_{v \in N^{-}(u) }{x(v) \inprod{u}{v}} } \\
&\leq \sum_{u \in \Ce}{x(u)^2} + \alpha \sum_{u \in \Ce}{ x(u) \sum_{v \in N^{+}(u) }{x(v)} } + L \sum_{u \in B}{ x(u) \sum_{v \in N^{-}(u) }{\inprod{u}{v}} } \\
&\leq \sum_{u \in \Ce}{x(u)^2} + \alpha \left( \sum_{u \in \Ce}{ x(u) } \right)^2 - L \sum_{u \in B}{ x(u)^2 }.
\end{align*}
By Cauchy--Schwarz, we have $\left( \sum_{u \in \Ce}{ x(u) } \right)^2 \leq n \sum_{u \in \Ce}{x(u)^2}$ and thus
\[ 
0 \leq (1 + \alpha n) \sum_{u \in \Ce}{x(u)^2} - L \sum_{u \in B}{ x(u)^2 } = (1 + \alpha n) \sum_{u \in \Ce \backslash B}{x(u)^2} - \frac{1}{2}(1 + \alpha n) \sum_{u \in B}{ x(u)^2 },
\]
so that we conclude $\sum_{u \in B}{ x(u)^2 } \leq 2 \sum_{u \in \Ce \backslash B}{ x(u)^2 } = 2 L^2 n$. Since $\gamma(u)^2 \leq L^2$ for all $u \in \Ce \backslash B$, we obtain the desired bound
\[ 
\sum_{u \in \Ce}{ \gamma(u)^2 } = \sum_{u \in B}{ x(u)^2 } + \sum_{u \in \Ce \backslash B}{ \gamma(u)^2 } \leq 2 L^2 n + L^2n = \frac{27}{4} (1+\alpha n)^2 n. \qedhere
\]
\end{proof}

\begin{proof}[Proof of \Cref{main}]
Let $\Ce$ be a set of $n = 2r + k$ unit vectors in $\R^r$ having maximum pairwise inner product $\alpha = \rho(r, 2r + k)$. Let $M = M_{\Ce}$ be the Gram matrix of $\Ce$. We would like to apply \Cref{Schnirelman} to $M$. To this end, note that $\rk(M) \leq r$ and $\tr(M) = n$. Moreover,
using \Cref{beta_edges} and \Cref{gamma_bound} we have
\begin{align*}
\tr(M^2) 
&= \sum_{u \in \Ce}{\left( \inprod{u}{u}^2 + \sum_{v \in N^{+}(u)}{\inprod{u}{v}^2} +  \sum_{v \in N^{-}(u)}{\inprod{u}{v}^2} \right) }\\
&\leq \sum_{u \in \Ce}{\left( 1 + \alpha^2 n + 1 + \alpha \gamma(u)^2 \right) }\\
&\leq 2n + (\alpha n)^2 + \frac{27}{4}(1+\alpha n)^2 \alpha n.
\end{align*}
Noting that $(\alpha n)^2 + \frac{27}{4}(1+\alpha n)^2 \alpha n \leq  \frac{27}{4} \left( (1 + \alpha n)^3 -1 \right)$, we now apply \Cref{Schnirelman} to conclude 
\begin{align*}
2n + 2k \leq \frac{n^2}{r} = \frac{\tr(M)^2}{r} \leq \tr(M^2) \leq 2n + \frac{27}{4} \left( (1 + \alpha n)^3 -1 \right),
\end{align*}
so that $k \leq \frac{27}{8} \left( (1 + \alpha n)^3 -1 \right)$, which is equivalent to the desired bound.
\end{proof}

Having verified \Cref{main}, we conclude this section by proving \Cref{q-ary}.
\begin{proof}[Proof of \Cref{q-ary}]
Let $u_1,\ldots, u_{q}$ be equidistant unit vectors in $\R^{q-1}$, so that $\inprod{u_i}{u_j} = -1/(q-1)$ for all $i \neq j$. For any $x \in [q]^r$, we let $f(x) \in \R^{(q-1)r}$ be the vector obtained from $x$ by replacing each coordinate $x_i$ with the vector $u_{x_i}$, i.e.\ $f(x)$ is the concatenation of $u_{x_1}, \ldots, u_{x_r}$. Now observe that for all $x, y \in [q]^r$, we have $\inprod{f(x)}{f(y)} = \sum_{i=1}^{r}{\inprod{u_{x_i}}{u_{y_i}}}$, so that $||f(x)||^2 = r$ and moreover, if we let $d(x,y)$ denote the Hamming distance between $x$ and $y$, i.e.\ the number of coordinates in which they differ, then we conclude that
\[
\inprod{f(x)}{f(y)} = r - \frac{q}{q-1} d(x,y).
\]
It follows that for any $q$-ary code $C$ with block length $r$ and distance $(1-1/q)r - j$, the collection $\Ce = \{ f(x) / \sqrt{r} : x \in C\} $ is a spherical $\left[-1, \frac{q j}{(q-1)r} \right]$-code in $\R^{(q-1)r}$, and thus \Cref{cor_spherical} yields $|C| = |\Ce| \leq (2 + o(1))(q-1)r$, as desired.
\end{proof}

\section{Concluding remarks}
Note that if $k \rightarrow \infty$ and $r$ is sufficiently large relative to $k$, then \Cref{main} and the construction of Bukh and Cox \cite{BC20} yield the bounds
\[
\Omega\left(k^{1/3}\right) \leq r \cdot \rho(r, 2r + k) \leq O\left(k^{1/2}\right),
\]
so it would be interesting to determine the order of growth of $r \cdot \rho(r, 2r + k)$ when $r$ is sufficiently large relative to $k$. Since we know that \Cref{main} is tight up to a multiplicative constant when $k = \Theta(r)$ (see \Cref{sidelnikov}), we conjecture that the answer should be $\Theta\left(k^{1/3}\right)$.

If we let $j = j(r)$ be any function of $r$ which satisfies $0 \leq j \ll r^{1/3}$ as $r \rightarrow \infty$, then \Cref{q-ary} and the lower bounds of \eqref{eq_plotkin} and \eqref{eq_ms} imply that for any prime power $q$, the quantity
\[
B_q(j) = \limsup_{r \rightarrow \infty} \frac{1}{r} A_q\left( r, \left( 1 - 1/q \right) r - j \right)  
\] 
satisfies $q \leq B_q(j) \leq 2(q - 1)$. In particular, we have shown that $B_2(j) =  2$, so it would be interesting to see if our methods can be extended in order to determine $B_q(j)$ for $q \geq 3$. In view of \eqref{eq_lower} and \eqref{eq_upper}, this would yield corresponding results for set-coloring Ramsey numbers.

\end{document}